\newcommand{\mc}{\mathcal}
\newcommand{\mb}{\mathbb}
\newcommand{\invnormzero}{\| L^{-1} \|}
\newcommand{\invnormXV}{\| L^{-1} \|_{{\mc L}(X, V)}}
\newcommand{\Honezero}{H^1_0(\Omega)}
\newcommand{\Hone}{H^1(\Omega)}
\newcommand{\Ltwo}{L^{2}(\Omega)}
\newcommand{\sqA}{A^{\frac{1}{2}}}
\newcommand{\sqAinv}{A^{-\frac{1}{2}}}
\newcommand{\ra}{\rightarrow}
\newcommand{\f}{\frac}
\newcommand{\minner}[2]{(#1,#2)_{M_{\sigma}}}
\newcommand{\mnorm}[1]{\| #1 \|_{M_\sigma}}
\newtheorem{Thm}{Theorem}[section]
\newtheorem{lem}[Thm]{Lemma}
\newtheorem{rem}[Thm]{Remark}
\def\vector#1{\mbox{\boldmath $#1$}}
\begin{document}
\begin{center}
\noindent{\bf\Large Inverse norm estimation of perturbed Laplace operators and corresponding eigenvalue problems
}\vspace{10pt}\\
 {\normalsize Kouta Sekine$^{1,*}$, Kazuaki Tanaka$^{2}$, Shin'ichi Oishi$^{3}$}\vspace{5pt}\\
 {\it\normalsize $^{1}$Faculty of Information Networking for Innovation and Design, Toyo University, 1-7-11 Akabanedai, kita-ku, Tokyo 115-0053, Japan.\\
 $^{2}$Institute for Mathematical Science, Waseda University, 3-4-1, Okubo Shinjyuku-ku, Tokyo 169-8555, Japan\\
 $^{3}$Department of Applied Mathematics, Faculty of Science and Engineering, Waseda University, 3-4-1 Okubo, Tokyo 169-8555, Japan}
\end{center}
{\bf Abstract}. 
			In numerical existence proofs for solutions of the semi-linear elliptic system,
			evaluating the norm of the inverse of a perturbed Laplace operator plays an important role.
			We reveal an eigenvalue problem to design a method for verifying the invertibility of the operator and evaluating the norm of its inverse based on Liu's method and the Temple-Lehman-Goerisch method.
			We apply the inverse-norm's estimation to the Dirichlet boundary value problem of the Lotka-Volterra system with diffusion terms and confirm the efficacy of our method.
\renewcommand{\thefootnote}{\fnsymbol{footnote}}
\footnote[0]{{\it E-mail address:} $^{*}$\texttt{ k.sekine@computation.jp}\\[-3pt]}
\renewcommand\thefootnote{*\arabic{footnote}}
\section{Introduction}
	Numerical existence proofs for solutions of differential equations have been developed in recent decades and applied to various problems for which purely analytical methods have failed (see, for example, \cite{nakao2019numerical} and the references therein).
	Our objective is the boundary value problem of the system of elliptic differential equations	
	\begin{align}
	\label{main_system}
	\left\{
	\begin{array}{ll}
		-\Delta u_1 = f_1( u_1, u_2, \cdots, u_{m} ) & {\rm in~~} \Omega,\\
		-\Delta u_2 = f_2( u_1, u_2, \cdots, u_{m} ) & {\rm in~~} \Omega,\\
		~~~ \cdots & \\
		-\Delta u_{m} = f_m( u_1, u_2, \cdots, u_{m} ) & {\rm in~~} \Omega,\\
		u_1 = u_2 = \cdots = u_{m} =0 & {\rm on~~} \partial\Omega,
	\end{array}
	\right.
	\end{align}
	where $\Omega \subset {\mb R}^n~(n = 1, 2, 3)$ is a bounded domain  with a Lipschitz boundary,
	and $ f_i :  (H^1_0(\Omega))^m \rightarrow L^2(\Omega)$ ($i=1,2,\cdots,m$) are given operators.
	Here, for $ \Hone $ denoting the first order real $ L^2 $-Sobolev space,
	we define
	$ \Honezero := \{ u \in H^1(\Omega) : u = 0~\mbox{on} ~ \partial \Omega\} $ with inner product $(\cdot, \cdot)_{H^1_0}:=(\nabla \cdot, \nabla \cdot)_{L^2}$.
	For two Banach spaces $X$ and $Y$, ${\mc B}(X, Y)$ stands for the space of bounded linear operators from $X$ to $Y$ defined over the whole of $X$ with the norm $\| T \|_{{\mc B}(X, Y)} := \sup\{ \| T u \|_{Y} / \| u \|_{X} : {u \in X \setminus \{0\}}\}  $.
	We simply write ${\mc B}(X)={\mc B}(X, X)$.
	The expression ``$T:D\subset X \ra Y$'' is equivalent to ``an operator $T$ from $X$ to $Y$ is defined on $D \subset X$''.
	The domain of $T$ is denoted by $\mc{D}(T)$.
	When $\mc{D}(T)=X$, we simply write $T: X \ra Y$.

	To write \eqref{main_system} as an operator equation, we prepare required notation.
	Let $X := (L^2(\Omega))^m$ and $V := (\Honezero)^m$ be the Hilbert spaces with the inner products $(\cdot, \cdot)_{X} := \sum_{i=1}^m (\cdot, \cdot)_{L^2}$ and $(\cdot, \cdot)_{V} := \sum_{i=1}^m (\cdot, \cdot)_{H^1_0}$, respectively.
	We denote the topological dual of $V$ by $V^*$ with the duality product $\langle \cdot, \cdot \rangle$ for the pair $\{ V,  V^*\}$.
	We define ${\mathcal A}:  V \rightarrow V^*$ by 
	$\langle {\mathcal A} u, v \rangle := (u, v)_{V}$ for all $u,v \in V$
	and
	the densely-defined closed operator $A: {\mc D}(A) \subset X \rightarrow X$ by
	$A u = {\mathcal A} u,~u \in {\mc D}(A) := \{ u \in H^1_0(\Omega) : {\mathcal A} u \in X \}$.
	The operator $A$ is a positive self-adjoint operator with the compact inverse $A^{-1}: X \ra X$.
	Then, \eqref{main_system} is characterized as the operator equation
	\begin{align}
		\label{eq:operatoreq}
		A u = f(u),
	\end{align}
	where $f(u) = (f_1(u), \cdots, f_m(u))^T,~u \in V$ assumed to be Fr\'echet differentiable.
	In numerical existence proofs for \eqref{eq:operatoreq}, the following linear operators are considered:
	\begin{align*}
		{\mathcal L} = {\mathcal A} - Q &:~~ V \ra V^*,\\
		L = A - Q &:~~ {\mc D}({A}) \subset X \ra X,
	\end{align*}
	where $Q \in {\mc B}(V, X)$.
	Choosing $Q$ as $f'[\hat{u}]: V \rightarrow X $, the Fr\'echet derivative of $f$ at an approximate solution $\hat{u} \in V$ of \eqref{eq:operatoreq}, is often useful in existing methods.
	To ensure the existence of a solution of \eqref{eq:operatoreq} and obtain an explicit enclosure of the solution numerically,
	it is important to confirm the invertibility of $L$ (or ${\mathcal L}$) and estimate an upper bound of the inverse norm $ \invnormzero $ (or $\| {\mathcal L}^{-1} \|$) \cite{oishi1995numerical,plum1992explicit,plum1994enclosures,plum2001computer,nakao2005numerical, plum2008,nakao2011numerical,watanabe2013posteriori,tanaka2014verified,nakao2015some,nakao2019numerical,watanabe2020some,sekine2020new}.
	The existing methods are categorized as follows:	
	\begin{description}
	 \item[Category 1:] Methods that solve a system of linear elliptic differential equations
	    \begin{itemize}
	      \item $\| {\mathcal L}^{-1} \|_{{\mathcal B}(V^*, V)}$: \cite{oishi1995numerical,nakao2005numerical,nakao2015some}, \cite[Part I]{nakao2019numerical}, \cite{sekine2020new}
	      \item $\| L^{-1} \|_{{\mathcal B}(X, V)}$: \cite{watanabe2013posteriori,watanabe2020some}, \cite[Part I]{nakao2019numerical}, \cite{sekine2020new}
	    \end{itemize}
	 \item[Category 2:] Methods that consider infinite-dimensional eigenvalue problems
	   \begin{itemize}
	      \item $\| {\mathcal L}^{-1} \|_{{\mathcal B}(V^*, V)}$: \cite{plum1994enclosures,tanaka2014verified}, \cite[Part II]{nakao2019numerical}
	      \item $\| L^{-1} \|_{{\mathcal B}(X, V)}$: None so far (the objective of this paper)
	    \end{itemize}
	\end{description}
	Methods in Category 1 estimate the desired norms $ \invnormzero $ and $\| {\mathcal L}^{-1} \|$ by solving the linear problems
	\begin{align*}
		&\mbox{Find} ~ \phi \in V ~~ \mbox{s.t.} ~~ {\mathcal L} \phi = g \in V^*
		\shortintertext{and}
		&\mbox{Find} ~ \phi \in {\mathcal D}( A) ~~ \mbox{s.t.} ~~ L \phi = g \in X,
	\end{align*}
	respectively.
	The third author of this paper has previously proposed a method in Category 1 in a general Banach space setting \cite{oishi1995numerical}.
	Although this method would give a rough estimation of the inverse norm, it is widely applicable to both integral equations and differential equations.	
	Subsequently, several works \cite{nakao2005numerical,nakao2015some,nakao2019numerical,watanabe2013posteriori,watanabe2020some,sekine2020new} further developed the methods in Category 1 and applied them to several problems.
	In particular, Nakao, Hashimoto, and Watanabe \cite{nakao2005numerical} proposed an effective evaluation method for the inverse norm based on constructive a priori error estimates on Hilbert spaces.
	Although the methods \cite{nakao2005numerical,nakao2015some,nakao2019numerical,watanabe2013posteriori,watanabe2020some} requires a technical norm evaluation of $\phi$, which has been essential so far,
	the resent study \cite{sekine2020new} designed a Block-Gaussian-elimination based method that requires no such technical evaluation but directly expresses $\phi$.
	
	Methods in Category 2, different approaches from Category 1, consider the eigenvalue problem corresponding to the focused operator $L$ or $\mathcal L$, trying to estimate its eigenvalues directly.
	When ${\mathcal L}$ is self-adjoint, these methods consider the infinite-dimensional eigenvalue problem 
	\begin{align}
		\label{eq:eig:selfadjoint}
		\mbox{Find} ~ (\lambda, \phi) \in {\mathbb R} \times V ~~ \mbox{s.t.} ~~ {\mathcal L} \phi = \lambda {\mathcal A} \phi
	\end{align}
	and use the equality $\| {\mathcal L}^{-1} \| = | \lambda_1 |^{-1}$,
	where $\lambda_1$ is the eigenvalue, the absolute value of which is minimal in the spectrum. 
	Plum \cite{plum1994enclosures} proposed the first method in Category 2 to estimate $\| {\mathcal L}^{-1} \|$ that is applicable to both bounded and unbounded domains $\Omega$.
	Plum estimated a rough lower bound for $|\lambda_1|$ using a homotopy based method (see, \cite{plum1991bounds} and \cite[Part II]{nakao2019numerical}) and refined its accuracy using the Temple-Lehman-Goerisch method (see \cite{lehmann1963optimale,Goerisch:1990:DGB:120387.120394,plum1994enclosures}).
	Subsequently, Liu and the third author of this paper proposed an a priori error estimation method for evaluating the eigenvalues of the weak Laplacian $ \mathcal {A} $ \cite{liu2013verified}.
	By applying this method to \eqref{eq:eig:selfadjoint},
	$ \| {\mathcal L}^{-1} \| $ was successfully evaluated in \cite{tanaka2014verified}.
	The eigenvalue evaluation method in \cite{liu2013verified} was further extended to a more general setting by Liu \cite{liu2015framework}, which will play an important role in the objective of this paper.
	The accuracy of the eigenvalues can be further improved using the Temple-Lehman-Goerisch method (again, see \cite{lehmann1963optimale,Goerisch:1990:DGB:120387.120394,plum1994enclosures}).
	Therefore, if more accuracy is required, we can implement a refinement step.
	This is a remarkable advantage of eigenvalue based methods,
	whereas comparing Category 1 and 2 in a general level is difficult
	because they have different strengths and weaknesses.
	
	Despite the methods in Category 2 have been applied to $\| {\mathcal L}^{-1} \|$ and helped to solve various problems, they have not succeeded in a direct evaluation of $\| L^{-1} \|$.
	One alternative way is to use the bound $ C_p $ for the embedding $ \Ltwo \hookrightarrow H^{-1}(\Omega) $ (called the Poincar\'e constant) that satisfies
	$\| \phi \|_{H^{-1}} \le C_p \| \phi \|_{L^2}, ~ \phi \in \Ltwo$ and rely on the inequality $\| L^{-1} \| \le C_p \| {\mathcal L}^{-1} \|$.
	However, this may lead to overestimation.
	Therefore, researchers have desired to solve the following questions:
	\begin{description}
	 \setlength{\leftskip}{1.0cm}
	 \item[I. ] Does the eigenvalue problem corresponding to $\| L^{-1} \|_{{\mathcal B}(X, V)}$ exist?
	 \item[II. ] If so, can we solve it?
	\end{description}
	The purpose of this paper is to answer these question.
	We reveal the eigenvalue problem corresponding to the inverse norm $ \| L^{-1} \| $.
	Then, using Liu's evaluation method \cite{liu2015framework},
	we solve the problem to verify the invertibility of $L$ and estimate $ \| L^{-1} \| $ explicitly.
	The accuracy of the estimated eigenvalues is improved using the Temple-Lehman-Goerisch method \cite{lehmann1963optimale,Goerisch:1990:DGB:120387.120394}.
	
	The remainder of this paper is organized as follows.
	In Section \ref{sec:eigpro}, we reveal the eigenvalue problem corresponding to the operator norm $ \| L^{-1} \| $.
	In Section \ref{sec:lowerbound},
	lower bounds for the eigenvalues of the problem are estimated using Liu's method \cite{liu2015framework}.
	In Section \ref{sect:ex}, we present a numerical example where our method is applied to the Lotka-Volterra system with diffusion terms.	
	\section{Eigenvalue problem corresponding to $ \| L^{-1} \|_{{\mc B}(X,V)} $}\label{sec:eigpro}
%
	
	This section reveals an eigenvalue problem corresponding to the operator norm $ \| L^{-1} \|_{{\mc B}(X,V)} $.
	We assume $Q \in \mc{B}(X)$ and denote its dual operator by $ Q^*\in \mc{B}(X) $.
	In our method, the operators $A^{-\frac{1}{2}}: X \ra X$ and $\sqA: {\mc D}(\sqA) \subset X \ra X$ play an important role. They are defined by the spectral decomposition
	\begin{align*}
	A^{-\frac{1}{2}}u := \sum_{i=1}^{\infty} \mu_i^{-\frac{1}{2}} (u, \psi_i)_{X} \psi_i 
	~~~ \mbox{and} ~~~
	A^{\frac{1}{2}} := (A^{-\frac{1}{2}})^{-1},
	\end{align*}
	where $\{ \psi_i \}_{i \in {\mathbb N}}$ is the set of eigenfunctions of $A$ completely orthonormalized in $X$, and $\mu_i$ stands for the eigenvalue corresponding to $\psi_i$.
	Note that the definition of $A^{-\frac{1}{2}}$ is consistent with that given by the Dunford integral \cite[Remark 2.7]{yagi2009abstract}.
	The followings properties of $ \sqA $ are known \cite{yagi2009abstract}:
	\begin{enumerate}
		\item[- ] $ {\mc D}(\sqA) = V $;
		\item[- ] $\sqA$ is positive and self-adjpoint;
		\item[- ] $\sqA$ has a compact inverse $\sqAinv: X \ra X$;
		\item[- ] $(\sqA u, \sqA v)_{X} = (u, v)_{V}, ~ u,v \in V$;
		\item[- ] $(\sqAinv u, \sqAinv v)_{V} = (u, v)_{X},~~u,v \in X$.
	\end{enumerate}
	
	We define the densely-defined closed linear operator $T: V \subset X \ra X$ by
	\begin{align*}
	T := \sqA - \sqAinv  Q^*,
	\end{align*}
	obtaining the relation
		$(T u, T v)_{X} = ( u,  v)_{V} - ( (Q + Q^*) u, v)_{X} + (A^{-1} Q^* u, Q^* v)_{X}$.
	The following eigenvalue problem corresponds to the desired norm $ \| L^{-1} \|_{{\mc B}(X,V)} $ (this is proved in Theorem \ref{Thm:eig_inverse_norm1} below):
	\begin{align}
	\label{eq:eig_prob}
	\mbox{Find} ~ (u, \lambda') \in V \times {[0,\infty)} ~~~ \mbox{s.t.}~~~
	(T u, T v)_{X} = \lambda' ( u, v)_{X} ~~\mbox{for~all}~~ v \in V.
	\end{align}

	\begin{Thm}\label{Thm:eig_inverse_norm1}
		Let $\lambda_{\min}' \geq 0$ be the minimal eigenvalue of \eqref{eq:eig_prob}.
		If $\lambda_{\min}' > 0$, then $L$ is bijective and
		\begin{align}
		\| L^{-1} \|_{{\mc B}(X,V)} = \f{1}{ \sqrt{\lambda_{\min}' } }.
		\end{align}
	\end{Thm}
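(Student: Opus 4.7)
The plan is to reduce the claim to two assertions about $T$: (i) that $\lambda_{\min}' > 0$ forces $T:V \to X$ to be a bounded-below bijection, and (ii) that $\|L^{-1}\|_{\mc B(X,V)}$ coincides with $\|T^{-1}\|_{\mc B(X,X)}$. The desired formula then follows from the variational characterization of the smallest point of $\sigma(T^{\ast}T)$.

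I would first identify the Hilbert adjoint of $T$ in $X$. Because $\sqA$ and $\sqAinv$ are self-adjoint, a direct computation gives $(Tu,v)_X = (u,\,\sqA v - Q\sqAinv v)_X$ for every $u,v \in V$, so $V \subset \mc D(T^{\ast})$ and $T^{\ast}v = \sqA v - Q\sqAinv v$ there. The reverse inclusion follows from the observation that for any $v\in\mc D(T^{\ast})$ the map $u\mapsto(\sqA u,v)_X$ must be continuous in $\|u\|_X$, forcing $v\in\mc D(\sqA)=V$. Hence $T^{\ast} = \sqA - Q\sqAinv = L\sqAinv$, and composing with $\sqA$ on the right yields the factorizations $L = T^{\ast}\sqA$ and $L^{\ast} = \sqA T$ on $\mc D(A)$.

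Next I would establish bijectivity of $T:V\to X$. The eigenvalue problem \eqref{eq:eig_prob} is the weak form of $T^{\ast}T u = \lambda' u$ for the non-negative self-adjoint operator $T^{\ast}T$, so $\lambda_{\min}' > 0$ is equivalent to the coercivity estimate $\|Tu\|_X^2 \ge \lambda_{\min}'\|u\|_X^2$ on $V$. This gives injectivity and closed range of $T$, but not surjectivity. For the latter I would write $T = \sqA(I - A^{-1}Q^{\ast})$ and note that $A^{-1}Q^{\ast}$ is compact on $V$ because it factors through the compact embedding $V\hookrightarrow X$ (a consequence of the compactness of $A^{-1}$). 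The Fredholm alternative then reduces the bijectivity of $I - A^{-1}Q^{\ast}$ on $V$ to injectivity, and every element $u$ of its kernel automatically lies in $\mc D(A)$ and satisfies $Tu = \sqA u - \sqAinv A u = 0$, so it vanishes. Composition with the isometric isomorphism $\sqA:V\to X$ shows that $T:V\to X$ is bijective; combined with the factorization $L = T^{\ast}\sqA$ and the bijectivity of $\sqA:\mc D(A)\to V$, this gives bijectivity of $L$.

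Finally, I would identify the norm. From $L^{-1} = \sqAinv(T^{\ast})^{-1}$ and the listed identity $\|\sqAinv w\|_V = \|w\|_X$ for $w\in X$, we obtain $\|L^{-1}g\|_V = \|(T^{\ast})^{-1}g\|_X$ for every $g\in X$, whence $\|L^{-1}\|_{\mc B(X,V)} = \|(T^{\ast})^{-1}\|_{\mc B(X,X)} = \|T^{-1}\|_{\mc B(X,X)}$. Substituting $u = T^{-1}g$ rewrites the last norm as $\sup_{u\in V\setminus\{0\}} \|u\|_X/\|Tu\|_X$, whose square equals $1/\lambda_{\min}'$ by the Rayleigh-quotient characterization of $\min\sigma(T^{\ast}T)$. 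The delicate step, which I expect to be the main obstacle, is the surjectivity argument: coercivity alone only gives injectivity and closed range, and it is precisely the Fredholm alternative together with the compactness of $A^{-1}Q^{\ast}$ on $V$ (i.e., the compact embedding $V\hookrightarrow X$) that upgrades $\lambda_{\min}' > 0$ to genuine invertibility of $T$ and hence of $L$.
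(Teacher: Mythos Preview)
Your proposal is correct and follows essentially the same route as the paper: both argue that $\lambda_{\min}'>0$ gives injectivity of $T$ via coercivity, both upgrade this to bijectivity of $T$ by applying the Riesz--Schauder/Fredholm alternative to $I-A^{-1}Q^{*}$ on $V$ (the paper writes this as $\tilde A^{-1/2}\tilde T = I-\tilde A^{-1/2}A^{-1/2}Q^{*}|_V$, which is the same operator), and both use the factorization $L=T^{*}A^{1/2}$ together with $\|(T^{*})^{-1}\|_{\mc B(X)}=\|T^{-1}\|_{\mc B(X)}$ to recover $\|L^{-1}\|_{\mc B(X,V)}=1/\sqrt{\lambda_{\min}'}$. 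Your explicit verification that $\mc D(T^{*})=V$ is a detail the paper leaves implicit.
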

	
	\begin{proof}
		We consider the minimal positive constant $K_T$ that satisfies
		\begin{align*}
		 \| \phi \|_{X} \le K_T \| T \phi \|_{X}, ~ \phi \in V.
		 \end{align*}
		A variational method ensures that
		\begin{align*}
		\f{1}{K_T^2}
		= \inf_{\phi \in V \setminus \{0\}} \f{ \| T \phi \|_{X}^2 }{\| \phi \|_{X}^2 }
		= \inf_{\phi \in V \setminus \{0\}} \f{ ( T \phi , T \phi )_{X} }{ ( \phi, \phi)_{X} }
		= \lambda_{\min}'.
		\end{align*}
		Therefore, when $\lambda_{\min}'>0$, we have
		\[
			K_T = \frac{1}{\sqrt{\lambda_{\min}'} }~(<\infty).
		\]
		This ensures that $T$ is injective because
		\begin{align*}
			T \phi = 0 \Leftrightarrow \| T \phi \|_{X} = 0 \Rightarrow \| \phi \|_{X} = 0 \Leftrightarrow \phi = 0.
		\end{align*}
		Thus, we have $\mbox{Null}(T) := \{ u \in {\mathcal D}(T) ~:~ Tu = 0 \} = \{ 0 \}$.
		
		Because $\| A^{\frac{1}{2}} u \|_{X} = \| u \|_{V}$ for all $u \in V = {\mathcal D}(A^{\frac{1}{2}})$,
		we define an operator $\tilde{A}^{\frac{1}{2}} \in {\mathcal B}(V, X)$ with the relation $A^{\frac{1}{2}} u = \tilde{A}^{\frac{1}{2}} u$ for all $u \in V$.
		We denote $\tilde{T} := \tilde{A}^{\frac{1}{2}} - A^{-\frac{1}{2}}Q^{*}|_{V} \in {\mathcal B}(V, X)$, where $\cdot|_V$ stands for the restriction of the domain.
		Because $T u = \tilde{T} u$ for all $u \in V = {\mathcal D}(T)$, we have $\mbox{Null}(T) = \mbox{Null}(\tilde{T})$ and ${\mathcal R}(T) = {\mathcal R}(\tilde{T})$, where ${\mathcal R}(T)$ stands for the range of $T$.
		Because $\tilde{T}$ is injective and $\tilde{A}^{-\frac{1}{2}}$ is bijective, $\tilde{A}^{-\frac{1}{2}} \tilde{T}$ is injective.
		Also, $\tilde{A}^{-\frac{1}{2}} A^{-\frac{1}{2}} Q^*|_V \in {\mathcal B}(V)$ is compact because $A^{-\frac{1}{2}} \in {\mathcal B}(X)$ is compact, and $\tilde{A}^{-\frac{1}{2}} \in {\mathcal B}(X, V)$ and $Q^*|_V  \in {\mathcal B}(V, X)$ are bounded.
		We can express $\tilde{A}^{-\frac{1}{2}} \tilde{T}$ as
		\begin{align*}
			&\tilde{A}^{-\frac{1}{2}} \tilde{T} = I - \tilde{A}^{-\frac{1}{2}} A^{-\frac{1}{2}} Q^*|_V \in {\mathcal B}(V).
		\end{align*}
		Therefore,  the Riesz-Schauder theorem ensures that $\tilde{A}^{-\frac{1}{2}} \tilde{T}$ is bijective
		and so is $\tilde{T}$ because of the bijectivity of $\tilde{A}^{-\frac{1}{2}}$.
		As a result, we ensure that ${\mathcal R}(T) = {\mathcal R}(\tilde{T}) = X$ and $T$ is bijective.
		Thus, $T$ has a bounded inverse, that is, ${\mathcal D}(T^{-1}) = X$ and $T^{-1} \in {\mc B}(X)$.

		Since $T$ is bijective, its adjoint operator $T^* = A^{\frac{1}{2}} - Q A^{-\frac{1}{2}}$ is also bijective.
		Because we have
		\begin{align}
		\label{Ts}
		T^* u 
		= ( \sqA - Q \sqAinv  )u
		= L \sqAinv u, ~~ u \in V,
		\end{align}
		$ L $ is expressed as 
		\begin{align*}
			L u = T^*\sqA u, ~~ u \in {\mathcal D}(A).
		\end{align*}
		Therefore, the bijectivity of $\sqA$ and $T^*$ ensures that of $ L $.
		The norm of $L^{-1}$ is evaluated as
		\begin{align*}
		\| L^{-1} \|_{{\mc B}(X,V)}
		=& \sup_{\psi \in X \setminus \{0\}} \f{ \| L^{-1} \psi \|_{V} }{ \| \psi \|_{X} }
		= \sup_{\psi \in X \setminus \{0\}} \f{ \| A^{\f{1}{2}} L^{-1} \psi \|_{X} }{ \| \psi \|_{X} } 
		= \sup_{\psi \in {\mathcal D}(A) \setminus \{0\}} \f{ \| A^{\f{1}{2}} \psi \|_{X} }{ \| L \psi \|_{X} } \\
		=& \sup_{\psi \in V \setminus \{0\}} \f{ \| \psi \|_{X} }{ \| L A^{-\f{1}{2}} \psi \|_{X} }
		= \sup_{\psi \in V \setminus \{0\}} \f{ \| \psi \|_{X} }{ \| T^* \psi \|_{X} }
		= \sup_{\psi \in X \setminus \{0\}} \f{ \| (T^*)^{-1} \psi \|_{X} }{ \| \psi \|_{X} } = \| (T^*)^{-1} \|_{ {\mc B}(X)}.
		\end{align*}
		Because both $T^{-1}$ and $(T^*)^{-1}$ are bounded linear operators on $X$, we have $(T^*)^{-1} = (T^{-1})^*$ and $\|(T^{-1})^*\|_{ {\mc B}(X)} = \|T^{-1}\|_{ {\mc B}(X)}$.
		Therefore,
		\begin{align*}
			\| L^{-1} \|_{{\mc B}(X,V)}
			=&  \| (T^*)^{-1} \|_{ {\mc B}(X)}
			= \| T^{-1} \|_{ {\mc B}(X)}
			= \sup_{\psi \in X \setminus \{0\}} \f{ \| T^{-1} \psi \|_{X} }{\| \psi \|_{X}}
			= \sup_{\phi \in V \setminus \{0\}} \f{ \| \phi \|_{X} }{\| T \phi \|_{X}} = K_T.
		\end{align*}
		
	\end{proof}

\begin{rem}
	An alternative problem for \eqref{eq:eig_prob} corresponding to $\| L^{-1} \| $ is
	\begin{align}
		\mbox{Find} ~~ (u, \lambda') \in V \times [0,\infty) ~~~ \mbox{s.t.} ~~~ (  T^*  u,  T^* v)_{X}  = \lambda' ( u, v)_{X} ~~\mbox{for~all}~~ v \in V,
		\label{eq:eig_prob2}
	\end{align}
	where $(T^* u, T^* v)_{V} = ( u, v )_{V} - ( A^{\frac{1}{2}} u, Q A^{-\frac{1}{2}} v )_{X} - ( Q A^{-\frac{1}{2}} u, A^{\frac{1}{2}} v )_{X} + ( Q A^{-\frac{1}{2}} u, Q A^{-\frac{1}{2}} v )_{X}$.
	If the minimal eigenvalue $\lambda_{\min}'$ of \eqref{eq:eig_prob2} is positive, we have $\| L^{-1} \| = 1/\sqrt{\lambda_{\min}'}$
	(see Theorem \ref{Thm:eig_inverse_norm2}).
	However, it is problematic to compute the terms associated with $A^{\frac{1}{2}}$ or $A^{-\frac{1}{2}}$ directly.
	Another alternative problem equivalent to \eqref{eq:eig_prob2} is
	\begin{align}
	\mbox{Find} ~~ (u, \lambda') \in {\mc D}(A) \times [0,\infty) ~~~ \mbox{s.t.} ~~~ (  L  u,  L v)_{X}  = \lambda' ( u, v)_{V} ~~\mbox{for~all}~~ v \in {\mc D}(A). \label{eq:eig_probL}
	\end{align}
	We obtain this problem by applying the replacements $u'=A^{-\frac{1}{2}}u$ and $v'=A^{-\frac{1}{2}}v$ to \eqref{eq:eig_prob2},
	eliminating the terms including $A^{\frac{1}{2}}$ or $A^{-\frac{1}{2}}$.
	Problem \eqref{eq:eig_probL} includes the term $(Au, Av)_X$ and requires higher-order approximations to estimate the desired eigenvalue.
	We can avoid such problematic calculations by considering \eqref{eq:eig_prob}.
\end{rem}

	\section{Lower bound of minimal eigenvalue}\label{sec:lowerbound}
	In this section, we estimate a lower bound for the minimal eigenvalue $\lambda_{\min}'$ of \eqref{eq:eig_prob} using Liu's method \cite{liu2015framework}.
	For this purpose, we introduce an infinite-dimensional eigenvalue problem as follows.
	Let $R_h: V \ra V_h$ be the Ritz projection defined by
	\begin{align}
	((I - R_h) u, v_h)_{V} = 0, ~ u \in V, ~  v_h \in V_h,
	\label{def:Ritz-projection}
	\end{align}
	where $V_h \subset V$ is a finite-dimensional subspace spanned by the bases $\{ \phi_1, ~ \phi_2, \cdots,~ \phi_N \}$.
	Replacing the term $(A^{-1} Q^* u, Q^* v)_{X}$ in \eqref{eq:eig_prob} to $(R_h A^{-1} Q^* u, Q^* v)_{X}$,
	we obtain the eigenvalue problem
	\begin{align}
		\mbox{Find} ~ (u, \lambda) \in V \times {\mathbb R} ~~~ \mbox{s.t.} ~~~ ( u, v)_{V} - ( (Q + Q^*) u, v)_{X} + (R_h A^{-1} Q^* u, Q^* v)_{X} = \lambda ( u, v)_{X},~v\in V.
		\label{eq:eig_prob_app}
	\end{align}
	All the eigenvalues of \eqref{eq:eig_prob_app} are real.
	This is confirmed by the following consideration.
	Let $P_h : X \ra V_h$ be the orthogonal projection defined by $ ( (I - P_{h}) u, v_h )_{X} = 0, u \in X, v_h \in V_h $.
	We define $A_h : V_h \ra V_h$ by $ (A_h u_h, v_h)_{X} = ( u_h, v_h)_{V}, ~ u_h, v_h \in V_h$.
	Because $R_h A^{-1} = A_h^{-1} P_h$ ( see \cite[(1.24)]{fujita2001operator}), we have
	\begin{align*}
		&(R_h A^{-1} Q^{*} u, Q^{*} v)_{X} = (A_h^{-1} P_h Q^{*} u, Q^{*} v)_{X} \\
		=& (P_h A_h^{-1} P_h Q^{*} u, Q^{*} v)_{X} = (Q P_h A_h^{-1} P_h Q^{*} u,  v)_{X}, ~ u,v \in V.
	\end{align*}
	The reality follows from the relationship
	\begin{align*}
		(Q P_h A_h^{-1} P_h Q^{*} u,  v)_{X} =(u, Q P_h A_h^{-1} P_h Q^{*} v)_{X}, ~ u,v \in V.
	\end{align*}

	For the modified problem \eqref{eq:eig_prob_app}, we prepare the following lemma.
	
	\begin{lem}\label{lem:transform_infinit_eigenvalue}
		The minimal eigenvalues $\lambda_{\min}'$ and $\lambda_{\min}$ respectively of \eqref{eq:eig_prob} and \eqref{eq:eig_prob_app} satisfy the inequality
		\begin{align*}
		\lambda_{\min} \le \lambda_{\min}'.
		\end{align*}
	\end{lem}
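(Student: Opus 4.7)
The plan is to apply the Rayleigh--Ritz variational characterization to both eigenvalue problems and reduce the comparison to a one-line inequality for the Ritz projection. Both bilinear forms in \eqref{eq:eig_prob} and \eqref{eq:eig_prob_app} are symmetric---for the modified form this is exactly the identity $R_h A^{-1}=A_h^{-1}P_h$ established immediately before the lemma---so the two minimal eigenvalues admit the variational characterizations
\begin{align*}
\lambda_{\min}' = \inf_{u\in V\setminus\{0\}} \f{(Tu,Tu)_X}{\|u\|_X^2}, \qquad
\lambda_{\min} = \inf_{u\in V\setminus\{0\}} \f{(u,u)_V - ((Q+Q^*)u,u)_X + (R_hA^{-1}Q^*u,Q^*u)_X}{\|u\|_X^2}.
\end{align*}
Expanding $(Tu,Tu)_X$ by the identity given in the paper, the two numerators coincide except in their last summand, so the assertion $\lambda_{\min}\le\lambda_{\min}'$ reduces to
\begin{align*}
(R_hA^{-1}Q^*u, Q^*u)_X \le (A^{-1}Q^*u, Q^*u)_X \qquad \text{for every } u\in V.
\end{align*}

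To prove this inequality I would set $w := A^{-1}Q^*u \in \mc{D}(A) \subset V$, so that $Q^*u = Aw$. The definitional identity $(Au,v)_X = (u,v)_V$ for $u\in\mc{D}(A)$ and $v\in V$ then gives
\begin{align*}
(A^{-1}Q^*u, Q^*u)_X = (w, Aw)_X = \|w\|_V^2, \qquad (R_hA^{-1}Q^*u, Q^*u)_X = (R_h w, Aw)_X = (R_h w, w)_V.
\end{align*}
Invoking the Ritz orthogonality \eqref{def:Ritz-projection} with the test function $v_h := R_h w \in V_h$ converts the right-hand expression into $\|R_h w\|_V^2$, and since the Ritz projection is $V$-orthogonal, $\|R_h w\|_V \le \|w\|_V$. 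Chaining these identities produces the required inequality.

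I do not anticipate a serious obstacle; the heart of the argument is a three-line calculation once the Rayleigh--Ritz characterization is in place. The only technical point meriting a brief justification is that \eqref{eq:eig_prob_app} actually possesses a well-defined minimal eigenvalue. This is guaranteed by the symmetry already noted together with the fact that the perturbation of $(\cdot,\cdot)_V$ induced by $-(Q+Q^*)$ and $R_h A^{-1} Q^* = A_h^{-1}P_h Q^*$ is $V$-compact (the latter has finite-dimensional range, and the former is compact via the compact embedding $V\hookrightarrow X$), so standard spectral theory yields a discrete spectrum bounded from below.
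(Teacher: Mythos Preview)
Your proposal is correct and follows essentially the same route as the paper. Both arguments reduce the comparison of the two Rayleigh quotients to the pointwise inequality $(R_hA^{-1}v,v)_X\le(A^{-1}v,v)_X$; the paper establishes this via the one-line Pythagorean identity $(A^{-1}v,v)_X=(R_hA^{-1}v,v)_X+\|(I-R_h)A^{-1}v\|_V^2$, while you unpack both sides as $\|R_hw\|_V^2\le\|w\|_V^2$ with $w=A^{-1}v$, which is the same content.
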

	
	\begin{proof}
		For any $u \in V$, we have
		\begin{align*}		
			(A^{-1} u , u )_{X} = (R_h A^{-1} u , u )_{X} + (( I - R_h ) A^{-1} u , ( I - R_h ) A^{-1} u)_{V} \ge (R_h A^{-1} u , u )_{X}.
		\end{align*}
		Therefore, for the eigenfunction $\psi' \in V$ corresponding to $\lambda_{\min}'$,
		\begin{align*}
		\lambda_{\min}' &= \f{ ( \psi' , \psi' )_{V} - ( (Q + Q^*) \psi' , \psi' )_{X} + (A^{-1} Q^* \psi' , Q^* \psi' )_{X} }{ ( \psi' , \psi' )_{X} } \\
		&\ge \inf_{\psi \in V \setminus\{ 0 \}} \f{ (\psi , \psi )_{V} - ( (Q + Q^*) \psi , \psi )_{X} + (R_h A^{-1} Q^* \psi , Q^* \psi )_{X}}{ ( \psi , \psi )_{X} } = \lambda_{\min}.
		\end{align*}
	\end{proof}

	Lemma \ref{lem:transform_infinit_eigenvalue} allows us to use $\lambda_{\min}$ as a lower bound for $\lambda_{\min}'$.
	To estimate $\lambda_{\min}$ explicitly, we discretize \eqref{eq:eig_prob_app}, obtaining the following problem
	\begin{align}
	&\mbox{Find} ~ (u_h, \lambda_h) \in V_h \times {\mathbb R} ~~~ \mbox{s.t.}~~~ \notag \\
	& ~~ ( u_h,  v_h)_{V} - ( (Q + Q^*) u_h, v_h)_{X} + (R_h A^{-1} Q^* u_h, Q^* v_h)_{X} = \lambda_h ( u_h, v_h)_{X},~ v_h \in V_h.
	\label{eq:finite_eig_prob_compute}
	\end{align}
	This is equivalent to the matrix eigenvalue problem
	\begin{align}
	\mbox{Find} ~ (\vector{x}, \lambda_h)^T \in {\mathbb R}^N \times {\mathbb R} ~ \mbox{s.t.} ~ \left( \vector{D} - \left( \vector{Q} + \vector{Q}^T \right) + \vector{Q} \vector{D}^{-1} \vector{Q}^T \right) \vector{x} = \lambda_h \vector{L} \vector{x}, 
	\label{eq:finite_eig_prob_compute_matrix}
	\end{align}
	where $\vector{D}, ~ \vector{L}, ~ \vector{Q} \in {\mathbb R}^{N \times N}$ are real matrices respectively defined by
	\[
	\vector{D}_{ij} := ( \phi_j, \phi_i)_{V}, ~
	\vector{L}_{ij} := ( \phi_j, \phi_i)_{X}, ~
	\vector{Q}_{ij} := ( Q \phi_j, \phi_i)_{X},\]
	and $\vector{Q}^T$ denotes the transposed matrix of $\vector{Q}$.

	By applying Liu's method \cite{liu2015framework} to \eqref{eq:eig_prob_app} and \eqref{eq:finite_eig_prob_compute_matrix}, we estimate eigenvalues $\lambda$ of \eqref{eq:eig_prob_app} in the following theorem.
	Hereafter, $C_p$ denotes the Poincar\'e constant that satisfies $ \| u \|_{X} \le C_p \| u \|_{V}$ for all $ u \in V$.
	A positive number $\sigma$ is chosen to satisfy $ \sigma > \| Q + Q^* \|_{{\mc B}(X,X)} $.
	
	\begin{Thm}\label{Thm:Computed_Theorem}
		Let $\lambda^{(i)}$ and  $\lambda_h^{(i)}~(i=1, 2, \cdots)$ be the $i$th eigenvalues of \eqref{eq:eig_prob_app} and \eqref{eq:finite_eig_prob_compute_matrix}, respectively.
		Suppose that there exists a positive constant $C_h$ that satisfies
		\begin{align*}
			\| (I - R_h) u \|_{X} \le C_h \| (I - R_h) u \|_{V}
		\end{align*}
		for all $u \in V$.
		Let us define $C_{M_{\sigma}}$ as
		\begin{align}
		C_{M_{\sigma}} := C_h \left( 1 + \frac{ \|  \sigma  - \left( Q + Q^* \right) \|_{{\mc B}(X)} + C_p^2 \| Q \|_{{\mc B}(X)}^2 }{ \lambda_h^{(1)} + \sigma } \right).
		\label{def:CMs}
		\end{align}
		Then, we have
		\begin{align}
		\f{ \lambda_h^{(i)} + \sigma }{1 + C_{M_{\sigma}}^2 \left( \lambda_h^{(i)} + \sigma \right) } - \sigma
		\le
		\lambda^{(i)}
		\le
		\lambda_h^{(i)}.
		\label{inq:eigenvalue_inclusion}
		\end{align}
	\end{Thm}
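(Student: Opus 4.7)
The plan is to recast problem \eqref{eq:eig_prob_app} as a generalized eigenvalue problem with a symmetric positive-definite bilinear form and then invoke the abstract framework of Liu \cite{liu2015framework}. I would introduce the shifted bilinear form
\[\minner{u}{v} := (u,v)_V - ((Q+Q^*)u,v)_X + (R_h A^{-1} Q^* u, Q^* v)_X + \sigma(u,v)_X\]
on $V$. Symmetry follows from the identity preceding Lemma \ref{lem:transform_infinit_eigenvalue}; positive-definiteness follows from $(R_h A^{-1} Q^* u, Q^* u)_X \ge 0$ together with $\sigma > \|Q+Q^*\|_{\mc B(X)}$, and as a by-product one obtains $\mnorm{u}^2 \ge \|u\|_V^2$ for every $u \in V$. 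Setting $\nu^{(i)} := \lambda^{(i)} + \sigma$ and $\nu_h^{(i)} := \lambda_h^{(i)} + \sigma$, problems \eqref{eq:eig_prob_app} and \eqref{eq:finite_eig_prob_compute} read $\minner{u}{v} = \nu\,(u,v)_X$ on $V$ and $V_h$ respectively, with all shifted eigenvalues positive.

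In this coercive form, Liu's framework yields the two-sided bound $\nu_h^{(i)}/(1 + C_{M_\sigma}^2 \nu_h^{(i)}) \le \nu^{(i)} \le \nu_h^{(i)}$ whenever $C_{M_\sigma}$ is any constant for which $\|u - Pu\|_X \le C_{M_\sigma}\mnorm{u - Pu}$ holds for every $u \in V$, with $P: V \to V_h$ the $M_\sigma$-orthogonal projection. The upper bound is the min-max inequality for Galerkin approximations; the lower bound is Liu's contribution. Subtracting $\sigma$ from each piece recovers \eqref{inq:eigenvalue_inclusion}, so the whole theorem reduces to establishing the projection estimate for the specific $C_{M_\sigma}$ given in \eqref{def:CMs}.

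To produce that $C_{M_\sigma}$, I would split $u - Pu = (u - R_h u) + (R_h u - Pu)$. The first summand satisfies $\|u - R_h u\|_X \le C_h\mnorm{u - Pu}$ by chaining the assumed $C_h$-estimate with $\|u - R_h u\|_V \le \|u - Pu\|_V \le \mnorm{u - Pu}$ (minimality of $R_h$ in the $V$-norm together with the inequality $\|\cdot\|_V \le \mnorm{\cdot}$ from the first paragraph). The second summand lies in $V_h$, and the Rayleigh-quotient characterization of $\lambda_h^{(1)}$ gives $\|R_h u - Pu\|_X \le (\lambda_h^{(1)}+\sigma)^{-1/2}\mnorm{R_h u - Pu}$; to bound the residual $\mnorm{R_h u - Pu}$ I would exploit $\minner{u - Pu}{R_h u - Pu} = 0$ together with the $V$-orthogonality of $u - R_h u$, which eliminates the $V$-inner-product term and leaves only cross-terms in $u - R_h u$ and $R_h u - Pu$ involving $\sigma - (Q+Q^*)$ and $R_h A^{-1} Q^*$.

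The main obstacle will be the sharp bound
\[\|R_h A^{-1} Q^* w\|_X \le C_p \|R_h A^{-1} Q^* w\|_V \le C_p \|A^{-1} Q^* w\|_V \le C_p^2\|Q\|_{\mc B(X)}\|w\|_X,\]
which is what produces the $C_p^2\|Q\|_{\mc B(X)}^2$ summand in the numerator of \eqref{def:CMs}. My plan is to chain, in order, the Poincar\'e inequality, the $V$-contractivity $\|R_h(\cdot)\|_V \le \|\cdot\|_V$ of the Ritz projection, and the duality identity $(A^{-1}g, v)_V = (g, v)_X$ tested with $v = A^{-1}g$ (which yields $\|A^{-1}g\|_V \le C_p\|g\|_X$). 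Combined with the elementary estimate $|((\sigma-(Q+Q^*))w_1, w_2)_X| \le \|\sigma - (Q+Q^*)\|_{\mc B(X)}\|w_1\|_X\|w_2\|_X$ for the remaining cross-term, this delivers the numerator of \eqref{def:CMs}, while the Rayleigh-quotient step supplies the denominator $\lambda_h^{(1)}+\sigma$, completing $C_{M_\sigma}$ in the stated form.
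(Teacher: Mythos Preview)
Your proposal is correct and follows essentially the same route as the paper: shift by $\sigma$ to obtain a coercive inner product $\minner{\cdot}{\cdot}$, apply Liu's framework, split $u-Pu=(u-R_hu)+(R_hu-Pu)$, kill the $V$-inner-product cross term via Ritz orthogonality, and bound the remaining $B$-type terms to produce exactly the constant in \eqref{def:CMs}. The only cosmetic difference is that the paper realizes the estimate on the finite-dimensional piece through the explicit matrix identity $\|\vector{L}^{1/2}\vector{G}^{-1}\vector{L}^{T/2}\|_E=(\lambda_h^{(1)}+\sigma)^{-1}$, whereas you reach the same factor via the Rayleigh-quotient inequality $\|v_h\|_X\le(\lambda_h^{(1)}+\sigma)^{-1/2}\mnorm{v_h}$ on $V_h$ combined with the $M_\sigma$-orthogonality of $u-Pu$; these are equivalent formulations of the same bound.
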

	
	\begin{rem}\label{rem:Linv_estimate}
		Although Theorem \ref{Thm:Computed_Theorem} gives a rough lower bound, particularly when $ \sigma $ is large,
		the lower bound approaches $ \lambda^{(i)} $ as $ C_h \downarrow 0$.
		If the lower bound in \eqref{inq:eigenvalue_inclusion} for $ i=1 $ is positive, Lemma \ref{lem:transform_infinit_eigenvalue} guarantees $\lambda'_{\min}>0$ and thus $L$ is nonsingular,
		the norm of which is evaluated as
		\begin{align*}
		\| L^{-1} \|_{{\mc B}(X,V)} \le  \left(\f{ \lambda_h^{(1)} + \sigma }{1 + C_{M_{\sigma}}^2 \left( \lambda_h^{(1)} + \sigma \right) } - \sigma \right)^{-\f{1}{2}}.
		\end{align*}
		When the positivity of $\lambda'_{\min}$ is not confirmed via Theorem \ref{Thm:Computed_Theorem} or more accuracy is required, we try to improve the accuracy using the Temple-Lehman-Goerisch method {\rm \cite{Goerisch:1990:DGB:120387.120394}}.
	\end{rem}

	\subsection{Proof of Theorem \ref{Thm:Computed_Theorem}, Step1 --- Applying Liu's theorem to \eqref{eq:eig_prob_app}}\label{subsec:Liu_Theorem}
	Because the right-hand side inequality in \eqref{inq:eigenvalue_inclusion} immediately follows from the Rayleigh-Ritz bound, we prove the left-hand side inequality.
	Liu's theorem \cite[Theorem 2.1]{liu2015framework} requires the positive definiteness of the bilinear form in the left-hand side of \eqref{eq:eig_prob_app}.
	Therefore, we add a perturbation so that the minimal eigenvalue of \eqref{eq:eig_prob_app} becomes positive.
	More precisely, we add $\sigma(u, v)_{X}$ to both sides, and rewrite the problem as
	\begin{align}
	\mbox{Find} ~ (u, \mu_{\sigma})^T \in V \times {\mathbb R} ~~~ \mbox{s.t.}~~~
	\minner{u}{v}
	= \mu_{\sigma} (u, v)_{X},~~v \in V,
	\label{eq:eigvalue_mu_sigma}
	\end{align}
	where $\mu_{\sigma} = \lambda + \sigma$ and $ \minner{u}{v} $ is the bilinear form defined by
	\begin{align}
	\minner{u}{v} := ( u, v)_{V} - ( (Q + Q^*) u, v)_{X} + (R_h A^{-1} Q^* u, Q^* v)_{X} + \sigma(u, v)_{X}.
	\label{def:Msigma}
	\end{align}
	We then show that $ \minner{\cdot}{\cdot} $ is an inner product of $ V $ to use \cite[Theorem 2.1]{liu2015framework}.
	Because $ R_h A^{-1} = A_h^{-1} P_h$, we have
	\begin{align}
	(R_h A^{-1} Q^* u, Q^* u)_{X}
	= (A_h^{-1} P_h Q^* u, Q^* u)_{X}
	= ( Q P_h A_h^{-1} P_h Q^* u,  u)_{X}
	= \|A_h^{-\f{1}{2}} P_h Q^* u\|_{X}^2,~ u \in V.
	\label{selfAdofB}
	\end{align}
	This ensures that the third term of $ \minner{u}{v} $ in \eqref{def:Msigma} is nonnegative and symmetric with respect to $ u,v \in V $.
	Moreover, the inequality $ \sigma > \| Q + Q^* \|_{{\mc B}(X,X)} $ leads to
	\begin{align}
	\left( \sigma u, u \right)_{X} - \left( (Q+Q^*) u, u \right)_{X} \ge \left( \sigma - \| Q + Q^* \|_{{\mc B}(X,X)} \right) \| u \|_{X}^2 > 0.
	\label{inq:sigma_positive}
	\end{align}
	Consequently, we confirm that
	\begin{align*}
	\minner{u}{u} > 0, ~ u \in V \setminus \{ 0 \}~~~\mbox{and}~~~
	\minner{u}{v} = \minner{v}{u}, ~ u, v \in V,
	\end{align*}
	that is, the bilinear form $ \minner{\cdot}{\cdot} $ is an inner product, and therefore we can use $ \mnorm{u} := \sqrt{ \minner{u}{u} },~ u\in V$ as a norm for $ V $.
	
	Let us define anther orthogonal projection $R_{M_{\sigma}} : V \ra V_h$ with respect to $ \minner{\cdot}{\cdot} $ by
	\begin{align*}
	\minner{(I - R_{M_{\sigma}}) u}{v_h} = 0, ~ v_h \in V_h.
	\end{align*}
	By applying \cite[Theorem 2.1]{liu2015framework} to \eqref{eq:eigvalue_mu_sigma}, 
	we confirm inequality \eqref{inq:eigenvalue_inclusion} for a 
	positive number $C_{M_{\sigma}}$ that satisfies
	\begin{align}
	\| (I - R_{M_{\sigma}}) u \|_{X} \le C_{M_{\sigma}} \| (I - R_{M_{\sigma}}) u \|_{M_{\sigma}}, ~ u \in V.
	\label{inq:CMs}
	\end{align}
	\subsection{Proof of Theorem \ref{Thm:Computed_Theorem}, Step2 --- Calculation of $C_{M_{\sigma}}$}\label{subsec:Calc_CMs}
	To complete the proof, 
	we confirm that $C_{M_{\sigma}}$ defined by \eqref{def:CMs} satisfies \eqref{inq:CMs}.
	In the following proof is inspired by the approach provided in \cite[Theorem 5]{nakao2008guaranteed}.
	Let us define $B:X \ra X$ by
	\begin{align*}	
		B := \sigma I - ( Q + Q^*) + Q^* R_h A^{-1} Q^*.
	\end{align*}	
	Because \eqref{selfAdofB} and \eqref{inq:sigma_positive} ensures $(B u, u)_{X} > 0, ~ u \in V$,
	we have
	\begin{align*}
		\| u \|_{V}^2 = (u, u)_{V} \le (u, u)_{V} + \left( B u, u \right)_{X} = \| u \|_{M_\sigma}^2, ~ \forall u \in V.
	\end{align*}
	Moreover, for any $u \in V$,
	\begin{align}
		\| (I - R_{M_{\sigma}}) u \|_{X} \le& \| (I - R_h) u \|_{X} + \| (R_h - R_{M_{\sigma}}) u \|_{X} \notag \\
		\le& C_h \| (I - R_h) u \|_{V} + \| R_{M_{\sigma}} ( I - R_h) u \|_{X}. \label{inq:wait}
	\end{align}
	We set $e := (I-R_h)u$. 
	For any $v_h \in V_h$,
	\begin{align*}
		\minner{R_{M_{\sigma}} e}{v_h} =& \minner{e}{v_h} = ( e, v_h)_{V} + ( B e, v_h)_{X} \\
		=& ((I - R_h)u, v_h)_{V} + ( B e, v_h)_{X} = ( B e, v_h)_{X} \\
		=&  ( P_h B e, v_h)_{X} =  ( P_h \psi, v_h)_{X},
	\end{align*}
	where $\psi := B e$.
	Let $\vector{G} :=  \vector{D} + \sigma \vector{L} - \left( \vector{Q} + \vector{Q}^T \right) + \vector{Q} \vector{D}^{-1} \vector{Q}^T  \in {\mathbb R}^{N \times N}$.
	This implies that
	\begin{align*}
		\vector{G} \vector{e} = \vector{L} \vector{\psi},
	\end{align*}
	where $\vector{e}$ and $\vector{\psi}$ are coefficient vectors of $R_{M_{\sigma}} e$ and $P_h \psi$, respectively.
	We denote the Euclidean norm and its matrix norm by $\| \cdot \|_{E}$.
	Since $\vector{L}$ is a symmetric positive-definite matrix, it can be decomposed to $\vector{L} = \vector{L}^{\frac{T}{2}} \vector{L}^{\frac{1}{2}}$, where $\cdot^T$ indicates transposition, and $\cdot^{\frac{T}{2}} = (\cdot^{\frac{1}{2}})^{T}$.
	Therefore, we have
	\begin{align}
		\| R_{M_{\sigma}} ( I - R_h) u \|_{X} =& \| R_{M_{\sigma}} e \|_{X} = \| \vector{L}^{\frac{1}{2}} \vector{e} \|_{E} = \| \vector{L}^{\frac{1}{2}} \vector{G}^{-1} \vector{L}^{\frac{T}{2}} \vector{L}^{\frac{1}{2}} \vector{\psi} \|_{E} \notag \\
		\le& \| \vector{L}^{\frac{1}{2}} \vector{G}^{-1} \vector{L}^{\frac{T}{2}} \|_{E} \| \vector{L}^{\frac{1}{2}} \vector{\psi} \|_{E} = \| \vector{L}^{\frac{1}{2}} \vector{G}^{-1} \vector{L}^{\frac{T}{2}} \|_{E} \| P_h \psi \|_{X} \notag \\
		 \le& \| \vector{L}^{\frac{1}{2}} \vector{G}^{-1} \vector{L}^{\frac{T}{2}} \|_{E} \| B e \|_{X}. \label{inq:EBwait}
	\end{align}
	Because $(\cdot, \cdot)_{M_\sigma}$ is an inner product, $\vector{G}$ is a symmetric positive-definite matrix.
	Hence,
	\begin{align*}
		\| \vector{L}^{\frac{1}{2}} \vector{G}^{-1} \vector{L}^{\frac{T}{2}} \|_{E} = \frac{1}{ \lambda_h^{(1)} + \sigma }.
	\end{align*}
	We obtain the following estimation
	\begin{align*}
		\| B \|_{{\mc B}(X)} &= \sup_{u \in X} \f{ \| \left( \sigma  - \left( Q + Q^* \right) + Q P_h A_h^{-1} P_h Q^* \right) u \|_{X} }{ \| u \|_{X} }\\
		&\le \|  \sigma  - \left( Q + Q^* \right) \|_{{\mc B}(X)} + \| Q \|_{{\mc B}(X)}  \sup_{u \in X} \f{\| P_h A_h^{-1} P_h Q^* u \|_{X} }{ \| u \|_{X} }.
	\end{align*}
	The second term is further evaluated as
	\begin{align*}
		\sup_{u \in X} \f{ \| P_h A_h^{-1} P_h Q^* u \|_{X} }{ \| u \|_{X} }
		&\le \sup_{u \in X} \f{ \| A_h^{-1} P_h Q^* u \|_{X} }{ \| u \|_{X} }
		\le C_p \sup_{u \in X} \f{ \| A_h^{-1} P_h Q^* u \|_{V} }{ \| u \|_{X} } 
		=C_p  \sup_{u \in X} \f{ \| R_h A^{-1} Q^* u \|_{V} }{ \| u \|_{X} } \\
		&\le C_p \sup_{u \in X} \f{ \| A^{-1} Q^* u \|_{V} }{ \| u \|_{X} } 
		\le C_p^2 \sup_{u \in X} \f{ \| Q^* u \|_{X} }{ \| u \|_{X} } 
		= C_p^2 \| Q^* \|_{{\mc B}(X)}.
	\end{align*}
	It follows from the relation $ \| Q \|_{{\mc B}(X)} =\| Q^* \|_{{\mc B}(X)}  $ that
	\begin{align*}
		\| B \|_{{\mc B}(X)} 
		\le \|  \sigma  - \left( Q + Q^* \right) \|_{{\mc B}(X)}  + C_p^2 \| Q \|_{{\mc B}(X)} ^2.
	\end{align*}
	From \eqref{inq:EBwait}, we obtain
	\begin{align*}
		\| R_{M_{\sigma}} ( I - R_h) u \|_{X} \le& \| \vector{L}^{\frac{1}{2}} \vector{G}^{-1} \vector{L}^{\frac{T}{2}} \|_{E} \| B e \|_{X} \\
		\le& \frac{ \|  \sigma  - \left( Q + Q^* \right) \|_{{\mc B}(X)} + C_p^2 \| Q \|_{{\mc B}(X)}^2 }{ \lambda_h^{(1)} + \sigma }  \| e \|_{X} \\
		\le& C_h \frac{ \|  \sigma  - \left( Q + Q^* \right) \|_{{\mc B}(X)}  + C_p^2 \| Q \|_{{\mc B}(X)}^2 }{ \lambda_h^{(1)} + \sigma }  \| (I - R_h) u \|_{V}.
	\end{align*}
	As a result, it follows from \eqref{inq:wait} that
		\begin{align*}
		\| (I - R_{M_{\sigma}}) u \|_{X} \le& C_h \| (I - R_h) u \|_{V} + \| R_{M_{\sigma}} ( I - R_h) u \|_{X} \\
		\le& C_{M_\sigma} \| (I - R_h) u \|_{V} \\
		\le& C_{M_\sigma} \| (I - R_{M_\sigma}) u \|_{V} \\
		\le& C_{M_\sigma} \| (I - R_{M_\sigma}) u \|_{M_\sigma}
	\end{align*}
	Thus, we complete the proof of Theorem \ref{Thm:Computed_Theorem}.
	\qed
	\section{Numerical Example}\label{sect:ex}
	In this section, we present an example where our method is applied to calculating the inverse norm $ \invnormXV $ derived from the boundary value problem of the Lotka-Volterra system with diffusion terms:
	\begin{align}
	\left\{
	\begin{array}{ll}
	-\Delta{u} = 6u - u^2 + 2uv & {\rm{in}} \hspace{0.1cm} \Omega{,}\\
	-\Delta{v} = 4v + 4uv - v^2 & {\rm{in}} \hspace{0.1cm} \Omega{,}\\
	u = v = 0                                & \partial \Omega{,}
	\end{array}
	\right.
	\label{example_program}
	\end{align}
	on $\Omega = (0,1)^2$.
	For this problem, we set $V=(H^1_0(\Omega))^2$ and $X=(L^2(\Omega))^2$.
	All computations were implemented on a computer with 2.90 GHz Intel Xeon Platinum 8380H CPU $\times$ 4, 3 TB
	RAM, and CentOS 8 using C++11 with GCC version 8.3.1.
	All rounding errors were strictly estimated using the toolboxes kv Library \cite{kashiwagikv} and VCP Library \cite{sekine2021numerical}.
	Therefore, the mathematical correctness of all the results were guaranteed.
	We constructed approximate solutions for \eqref{example_program} from a Legendre polynomial basis.
	We define the set $\{ \psi_1, \psi_2, \cdots \psi_N \}$ of Legendre polynomials by
	\begin{align*}
	\psi_i(x) := \frac{1}{i(i+1)}x(1-x)\frac{dP_i}{dx}(x)~~\mbox{with}~~P_i = \frac{(-1)^i}{i!}\left( \frac{d}{dx} \right)^i x^i (1 - x)^i, ~ i = 1,2, \cdots,
	\end{align*}
	and define the finite dimensional subspace $V_h^N := \left( \mbox{span}( \psi_1, \psi_2, \cdots \psi_N ) \otimes \mbox{span}( \psi_1, \psi_2, \cdots \psi_N ) \right)^2$, where $\otimes$ stands for the tensor product.
	We computed approximate solutions $\hat{u}, \hat{v} \in V_h^{40}$, obtaining the graphs in Figure \ref{Fig:approximate_splution}.

	\begin{figure}[h]
		\begin{minipage}{0.5\hsize}
			\begin{center}
				\includegraphics[width=60mm]{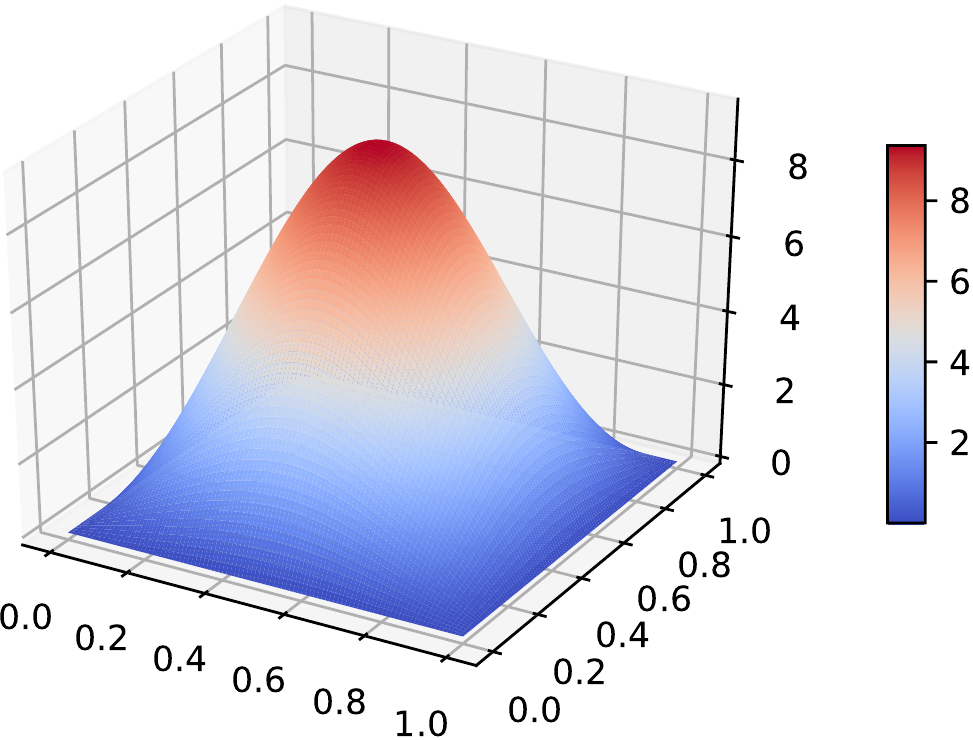}
			\end{center}
		\end{minipage}
		\begin{minipage}{0.5\hsize}
			\begin{center}
				\includegraphics[width=60mm]{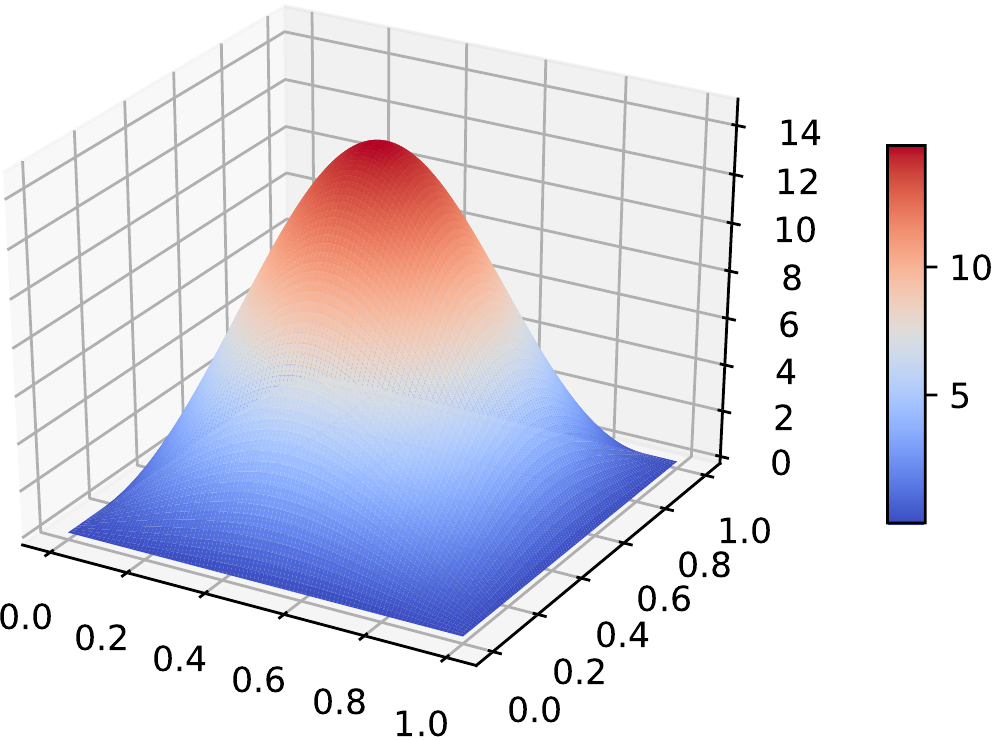}
			\end{center}
		\end{minipage}
		\caption{Approximate solutions of \eqref{example_program}. Left: $\hat{u}$, Right: $\hat{v}$.}
		\label{Fig:approximate_splution}
	\end{figure}

	The linearized operator $L : \mathcal D(A) \subset X \ra X$ corresponding to the Fr\'echet derivative of problem \eqref{example_program} at $\{\hat{u}, \hat{v}\}$ is
	\begin{align}
	\label{linearLB}
	L = A- Q
	\mbox{~~with~~}
	Q=
	\left(
	\begin{array}{cc}
	6 - 2\hat{u} + 2\hat{v}  & 2 \hat{u} \\
	4\hat{v} & 4 + 4 \hat{u} - 2\hat{v}
	\end{array}
	\right),
	\end{align}
	where $Q:X \ni (u,v)^T \mapsto (6 - 2\hat{u} + 2\hat{v}) u + 2 \hat{u} v, 4\hat{v} u + (4 \hat{u} - 2\hat{v}) v)^T \in X$.
	Numerical existence proofs for \eqref{example_program} need an explicit estimation of the inverse norm $ \|L^{-1}\| $.
	Whereas $L$ is non-self-adjoint, Theorems \ref{Thm:eig_inverse_norm1} and \ref{Thm:Computed_Theorem} remain applicable.
	We describe the procedure for obtaining an upper bound for the norm $ \|L^{-1}\| $.
	First, applying Theorem \ref{Thm:Computed_Theorem} to problem \eqref{eq:eig_prob_app}, we look for some $ j $ satisfying
	\begin{align}
	\label{rhocond}
	\mbox{Upper~bound~of~} \lambda^{(j-1)} < \mbox{Lower~bound~of~} \lambda^{(j)},
	\end{align}
	and set $\nu$ to be the upper bound of $\lambda^{(j-1)}$ that is confirmed to be distinct from the smallest eigenvalue $\lambda^{(1)}$.
	Next, we obtain a refined lower bound $\lambda^{(1)}$ using the Temple-Lehman-Goerisch method with $\nu$.
	Note that the accuracy of the lower bound improved by the method depends on neither $ i $ nor the accuracy of $ \nu $ as long as \eqref{rhocond} is satisfied (see \cite{Goerisch:1990:DGB:120387.120394,plum1994enclosures} for details about the Temple-Lehman-Goerisch method).
	Finally, we estimate an upper bound for $ \|L^{-1}\| $ using Theorem \ref{Thm:eig_inverse_norm1}.
	
	Recall that the Poincar\'e constant $ C_p $ required in Theorem \ref{Thm:Computed_Theorem} is calculated as $ C_p = 1/(\sqrt{2} \pi) \approx 0.2251 $ with estimating rounding errors strictly.
	The other constants with respect to $ Q $ were estimated as follows:
	\begin{align*}
	&\| Q \|_{{\mathcal B}(X)} \le 82.580303007092866,~~~~
	\| Q + Q^* \|_{{\mathcal B}(X)} \le 155.29112928765162,\\
	&\| \sigma I - (Q + Q^*) \|_{{\mathcal B}(X)} \le 308.06051565984086,
	\end{align*}
	where we set $ \sigma $ to be the floating point number corresponding to $155.29112928765162+0.0001$. Each decimal number represents the nearest floating point number in the 64 bit double precision corresponding to the input. 
	This calculation was carried out dependent not on $V_h$ as required in Theorem \ref{Thm:Computed_Theorem} but only on $ \hat{u} $ and $ \hat{v} $.
	
	Table \ref{Tab:Numerical_result1} shows numerical results for different dimensions of $V_h\,(=V_h^N)$, where the space $V_h$ required in Theorem \ref{Thm:Computed_Theorem} before using the Temple-Lehman-Goerisch method was chosen as $V_h^N$ constructed from the Legendre polynomials defined at the beginning of this section.
	$\lambda_h^{(1)}$ stands for the inclusion of the minimal eigenvalue of \eqref{eq:finite_eig_prob_compute_matrix}.
	For example, $1.23_{456}^{789}$ denotes the interval $[1.23456, 1.23789]$.
	We obtained negative lower bounds for eigenvalues of \eqref{eq:eig_prob_app} using Theorem \ref{Thm:Computed_Theorem} in all cases.
	Therefore, without the Temple-Lehman-Goerisch method, we failed in showing $\lambda_{\min}' > 0$ required by Theorem \ref{Thm:eig_inverse_norm1}.

	\begin{table}[h]
		\caption{Numerical results using Theorem \ref{Thm:Computed_Theorem} without the Temple-Lehman-Goerisch method.
		}
		\label{Tab:Numerical_result1}
		\begin{center}
			{
				\renewcommand{\arraystretch}{1.2}
				\begin{tabular}{ c | c | c | c | c | }
					\hline
					$N$    & $C_h $                       & $C_{M_{\sigma}}$    &   $ \lambda_h^{(1)}$                                & lower bound of $\lambda^{(1)}$ using \eqref{inq:eigenvalue_inclusion}      \\ \hline \hline
					$ 60 $ & $7.88 \times 10^{-3}$  & $0.0399$               &   $5.861693_{1651409078}^{2446435773}$  & $-26.963087160457065$ \\ \hline
					$ 80 $ & $5.99 \times 10^{-3}$  & $0.0303$               &   $5.861693_{0544573868}^{3569412915}$  & $-14.900551378456357$\\ \hline
					$100 $ & $4.84 \times 10^{-3}$ & $0.0245$               &  $5.86169_{27624274461}^{36483586893}$ & $-8.2725800704491519$ \\ \hline
				\end{tabular}
			}
		\end{center}
	\end{table}

	Table \ref{Tab:Numerical_result} shows the results from Theorem \ref{Thm:Computed_Theorem} together with the Temple-Lehman-Goerisch method.
	When $N \le 60$, our method failed in finding a pair of eigenvalues satisfying \eqref{rhocond} and also estimating $\|L^{-1}\|$.
	However, for $ N = 80,100$, our method succeeded in finding such pairs and obtained highly accurate lower bounds of $\lambda^{(1)}$ displayed in Table \ref{Tab:Numerical_result1}.
	This implies that the Temple-Lehman-Goerisch method has a significant influence on the accuracy of the inverse norm.
	Given approximate eigenfunctions, the method is known to give almost optimal evaluation of eigenvalues when \eqref{rhocond} is satisfied \cite{lehmann1963optimale,Goerisch:1990:DGB:120387.120394,plum1994enclosures,plum1997guaranteed}.

	\begin{table}[h]
		\caption{Numerical results using Theorem \ref{Thm:Computed_Theorem} without Lehman-Goerisch's method.
		}
		\label{Tab:Numerical_result}
		\begin{center}
			{
				\renewcommand{\arraystretch}{1.2}
				\begin{tabular}{ | c | c | c | c | c | }
					\hline
					$N$      & $j$      & $\nu\,(\ge \lambda^{(j-1)})$         & lower bound of $\lambda^{(1)}$        & $ \invnormXV \le$   \\ \hline \hline
					$ 60 $  & $--$   &   $--$                                        &  $--$                                            & $--$                       \\ \hline
					$ 80 $  & $3$     &   $7.8241841395$                       &  $5.8616914678651141$  &  $0.4131$              \\ \hline
					$100 $ & $3$    &   $7.8240844856$                        &  $5.8616913204530476$  &  $0.4131$              \\ \hline
				\end{tabular}
			}
		\end{center}
	\end{table}

	\section{Conclusion}
	We provided the answers to the following questions:
	\begin{description}
	\setlength{\leftskip}{1.0cm}
	 \item[I. ] Does the eigenvalue problem corresponding to $\| L^{-1} \|_{{\mathcal B}(X, V)}$ exist?
	 \item[II. ] If so, can we solve it?
	\end{description}
	Theorems \ref{Thm:eig_inverse_norm1} and \ref{Thm:eig_inverse_norm2} answered to Question I,
	revealing the eigenvalue problem corresponding to $\| L^{-1} \|$.
	The fractional operator $A^{\frac{1}{2}}$ played an important role in these theorems.
	Using Liu's theorem \cite[Theorem 2.1]{liu2015framework}, we proved Theorem \ref{Thm:Computed_Theorem} that gives a rough lower bound for the eigenvalues of the revealed problem.
	We applied our method to the Dirichlet boundary value problem of the Lotka-Volterra system with diffusion terms.
	We saw that the Temple-Lehman-Goerisch method has a significant influence on the accuracy of $\| L^{-1} \|$.
	\appendix
	\renewcommand{\theThm}{\Alph{section}.\arabic{Thm}}
	\section{Simple eigenvalue problem corresponding to $ \| L^{-1} \| $}\label{sec:another_transform}
	\begin{Thm}\label{Thm:eig_inverse_norm2}
		Let $\lambda_{\min}'$ be the minimal eigenvalue of the problem \eqref{eq:eig_prob2}.
		If $\lambda_{\min}' > 0$, then $L$ is bijective and 
		\begin{align*}
			\| L^{-1} \|_{{\mc B}(X,V)} = \frac{1}{\sqrt{\lambda_{\min}' }}.
		\end{align*}
	\end{Thm}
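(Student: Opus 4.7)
The strategy is to follow the template of the proof of Theorem \ref{Thm:eig_inverse_norm1}, but working directly with the operator $T^{*} = \sqA - Q \sqAinv$, viewed as a densely-defined closed operator from $X$ to $X$ with $\mc{D}(T^{*}) = V$. First I would introduce the minimal positive constant $K_{T^{*}}$ satisfying $\| \phi \|_{X} \le K_{T^{*}} \| T^{*} \phi \|_{X}$ for all $\phi \in V$; since the bilinear form of \eqref{eq:eig_prob2} is precisely $(T^{*} \phi, T^{*} \phi)_{X}$, the variational characterization
\[
\f{1}{K_{T^{*}}^{2}} = \inf_{\phi \in V \setminus \{0\}} \f{(T^{*} \phi, T^{*} \phi)_{X}}{(\phi, \phi)_{X}} = \lambda_{\min}'
\]
gives $K_{T^{*}} = 1 / \sqrt{\lambda_{\min}'}$. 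The hypothesis $\lambda_{\min}' > 0$ then forces $K_{T^{*}} < \infty$ and the injectivity of $T^{*}$, exactly as in Theorem \ref{Thm:eig_inverse_norm1}.

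For surjectivity I would introduce the bounded extension $\tilde{T}^{*} := \tilde{A}^{\frac{1}{2}} - Q \tilde{A}^{-\frac{1}{2}}|_{V} \in \mc{B}(V, X)$, which agrees with $T^{*}$ on $V$, and use the factorization
\[
\tilde{A}^{-\frac{1}{2}} \tilde{T}^{*} = I - \tilde{A}^{-\frac{1}{2}} Q \sqAinv|_{V} \in \mc{B}(V).
\]
The correction term is compact because $\sqAinv \in \mc{B}(X)$ is compact and sits between the bounded operators $Q|_{V} \in \mc{B}(V, X)$ and $\tilde{A}^{-\frac{1}{2}} \in \mc{B}(X, V)$. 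Combined with injectivity, the Riesz-Schauder theorem then yields the bijectivity of $\tilde{A}^{-\frac{1}{2}} \tilde{T}^{*}$, and hence of $\tilde{T}^{*}$ and $T^{*}$. Next, relation \eqref{Ts} gives $L \psi = T^{*} \sqA \psi$ for $\psi \in \mc{D}(A)$, so the bijectivity of $\sqA : \mc{D}(A) \ra V$ composed with that of $T^{*} : V \ra X$ delivers the bijectivity of $L : \mc{D}(A) \ra X$.

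Finally, running the same sup-chain computation as in the proof of Theorem \ref{Thm:eig_inverse_norm1}, where the substitution $\psi = \sqAinv \eta$ with $\eta \in V$ converts the supremum over $\mc{D}(A)$ into a supremum over $V$, I obtain
\[
\| L^{-1} \|_{\mc{B}(X,V)} = \sup_{\psi \in \mc{D}(A) \setminus \{0\}} \f{\| \sqA \psi \|_{X}}{\| L \psi \|_{X}} = \sup_{\eta \in V \setminus \{0\}} \f{\| \eta \|_{X}}{\| T^{*} \eta \|_{X}} = K_{T^{*}} = \f{1}{\sqrt{\lambda_{\min}'}}.
\]
Since the Rayleigh characterization of $\lambda_{\min}'$ through $T^{*}$ is immediate from \eqref{eq:eig_prob2}, no step introduces a genuinely new difficulty beyond the parallel argument for Theorem \ref{Thm:eig_inverse_norm1}; I expect the main bookkeeping concern to be keeping the interplay between $\sqA$, $\sqAinv$, and their bounded $V$-extensions straight, particularly in justifying that the perturbation $\tilde{A}^{-\frac{1}{2}} Q \sqAinv|_{V}$ is indeed a compact endomorphism of $V$.
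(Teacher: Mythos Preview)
Your proposal is correct and follows essentially the same route as the paper's proof: the paper works with $L\sqAinv$ (which equals your $T^{*}$ by \eqref{Ts}), introduces the same constant $K$, obtains $1/K^{2}=\lambda_{\min}'$ by the variational principle, and then computes $\|L^{-1}\|_{\mc{B}(X,V)}=K$ via the identical $\sup$-chain. The only difference is that the paper's proof is terser---it dispatches bijectivity with the single sentence ``As in the proof of Theorem \ref{Thm:eig_inverse_norm1}, linear operators $L\sqAinv$ and $L$ are bijective''---whereas you spell out the Riesz--Schauder step explicitly; one minor bookkeeping slip in your sketch is that in the compactness argument for $\tilde{A}^{-\frac{1}{2}} Q \sqAinv|_{V}$ the compact factor $\sqAinv$ is not literally ``sandwiched between'' $Q|_{V}$ and $\tilde{A}^{-\frac{1}{2}}$ but rather appears on the right (precomposed with the bounded inclusion $V\hookrightarrow X$), though the conclusion is of course unaffected.
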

	
	\begin{proof}
		Consider the minimal positive constant $K$ that satisfies
		$ \| \phi \|_{X} \le K \| L \sqAinv  \phi \|_{X},~\phi \in V. $
		The relationship between $K$ and $\lambda_{\min}'$ is 
		\begin{align*}
		\f{1}{K^2} = \inf_{\phi \in V \setminus \{ 0 \}} \f{ \| L \sqAinv  \phi \|_{X}^2 }{ \| \phi \|_{X}^2 }
		= \inf_{\phi \in V \setminus \{ 0 \} } \f{ ( L \sqAinv  \phi, L \sqAinv  \phi )_{X} }{ ( \phi, \phi )_{X} }
		= \lambda_{\min}'~(> 0).
		\end{align*}
		Therefore, when $\lambda_{\min}' >0$, we have
		$ K = 1/{\sqrt{\lambda_{\min}'}}~(<\infty) $,
		which ensures that $L \sqAinv $ is injective.
		As in the proof of Theorem \ref{Thm:eig_inverse_norm1}, linear operators $L \sqAinv $ and $L$ are bijective.
		Thus, we have
		\begin{align*}
		\| L^{-1} \|_{ {\mc B}(X, V) }
		=& \sup_{ \phi \in X \setminus \{ 0 \} } \f{ \| L^{-1} \phi \|_{V} }{ \| \phi \|_{X} } 
		= \sup_{ \phi \in X \setminus \{ 0 \} } \f{ \| A^{\f{1}{2}} L^{-1} \phi \|_{X} }{ \| \phi \|_{X} } \\
		=& \sup_{ \phi \in X \setminus \{ 0 \} } \f{ \| (L \sqAinv )^{-1} \phi \|_{X} }{ \| \phi \|_{X} } 
		= \inf_{ \phi \in V \setminus \{ 0 \} } \f{ \| L \sqAinv  \phi \|_{X} }{ \| \phi \|_{X} } = K.
		\end{align*}
	\end{proof}	
	\section*{Acknowledgement}
	This work is supported by JST CREST Grant Number JPMJCR14D4, and MEXT as ``Exploratory Issue on Post-K computer'' (Development of verified numerical computations and super high-performance computing environment for extreme researches).
	The first author (K.S.) is supported by JSPS KAKENHI Grant Number 16K17651.
	The second author (K.T.) is supported by JSPS KAKENHI Grant Number JP17H07188 and JP19K14601, and Mizuho Foundation for the Promotion of Sciences.
	We thank the editors and reviewers for giving useful comments to improve the contents of this manuscript.

\bibliographystyle{amsplain} 
\bibliography{ref.bib}

\end{document}